\documentclass[11pt]{amsart}
\usepackage{amsmath,amsthm,amssymb,latexsym,euscript}
\usepackage{mathrsfs}
\usepackage{amsfonts}
\usepackage{amscd}
\usepackage[all]{xypic}

\topmargin=0pt \headheight=0pt \oddsidemargin=0in
\evensidemargin=0in \textheight=8.75in
 \textwidth=6in

\let\goth\mathfrak

\def\gc{\goth c}

\def\gu{\goth u} \def\fru{{\gu}}
\def\frv{\goth v}

\def\gz{\goth z}

\def\uce{\mathfrak{uce}}
\def\frsp{\mathfrak{sp}}
\def\sq{\mathfrak{sq}}
\def\fre{\goth e}


\newcommand\scQ{\mathcal Q}
\newcommand\scF{\mathcal F}


\newcommand\ch{\sp{\scriptscriptstyle\vee}}
\newcommand\ot{\otimes}

%
\def\beq{\begin{equation}}
\def\eeq{\end{equation}}
\def\bea{\begin{eqnarray}}
\def\eea{\end{eqnarray}}
\def\beas{\begin{eqnarray*}}
\def\eeas{\end{eqnarray*}}

\def\cplus{\hbox{$\supset${\raise1.05pt\hbox{\kern -0.55em
${\scriptscriptstyle +}$}}\ }}


 \DeclareMathOperator{\tr}{tr}
\DeclareMathOperator{\str}{str} 
 
\DeclareMathOperator{\Span}{Span}

 \DeclareMathOperator{\Id}{Id}

\DeclareMathOperator{\supp}{supp} 
\DeclareMathOperator{\Ker}{Ker}

\DeclareMathOperator{\Mat}{Mat}
\DeclareMathOperator{\osp}{\mathfrak{osp}}

\DeclareMathOperator{\HC}{HC} \DeclareMathOperator{\rmH}{H}
\DeclareMathOperator{\HF}{HF}
\def\N{\mathbb N} 
\def\Z{\mathbb Z} \def\ZZ{\Z}

\newcommand\al{\alpha}
\newcommand\be{\beta}

\newcommand\la{\lambda} 
 \newcommand\vphi{\varphi}

\newcommand\ta{\tau}

\def\dirlim{\varinjlim}

\def\la{\langle}
\def\ra{\rangle}
\newcommand\lsl{\ensuremath{\mathfrak{sl}}}
\newcommand\lgl{\ensuremath{\mathfrak{gl}}}
\newcommand\st{\ensuremath{\mathfrak{st}}}


\theoremstyle{plain}
\newtheorem{theorem}{Theorem}[section]
\newtheorem{lemma}[theorem]{Lemma}
\newtheorem{prop}[theorem]{Proposition}
\newtheorem{cor}[theorem]{Corollary}

\theoremstyle{definition}

\newtheorem{example}[theorem]{Example}

\newtheorem{blank}[theorem]{}
\newtheorem{remark}[theorem]{Remark}

\numberwithin{equation}{section} \allowdisplaybreaks

\title{Universal Central Extensions of Direct Limits of Lie
Superalgebras}
\author{Erhard Neher}
\address{E.~Neher: University of Ottawa, Ottawa, Ontario, Canada}
\email{neher@uottawa.ca}
\author{Jie Sun}
\address{J.~Sun: Department of Mathematics \\
University of California, Berkeley, CA 94720, USA}
\email{jiesun@math.berkeley.edu} \subjclass[2000]{17B05}
\thanks{This research is partially supported by the Natural
Sciences and Engineering Research Council (NSERC) of Canada through
the first author's Discovery grant and the second author's
Postdoctoral Fellowship.}
\date{}

\begin{document}
\maketitle

\begin{abstract} We show that the universal central extension of a
direct limit of perfect Lie superalgebras $L_i$ is (isomorphic to)
the direct limit of the universal central extensions of $L_i$. As an
application we describe the universal central extensions of some
infinite rank Lie superalgebras.
\end{abstract}

\section*{Introduction}

Central extensions appear naturally in the theory of infinite
dimensional Lie algebras. For example, they are fundamental for the
theory of affine Kac-Moody Lie algebras and extended affine Lie
algebras. Centrally extended Lie algebras often have a more
interesting representation theory than the original Lie algebra,
which makes central extension an interesting topic for applications,
e.g., in physics. A convenient way to find ``all'' of them, is to
determine the universal central extension of a given Lie algebra,
which exists for perfect Lie algebras (well-known) and superalgebras
\cite{N1}.

Direct limit of Lie superalgebras are an important way to construct
infinite dimensional Lie superalgebras. Examples include various
types of locally finite Lie (super)algebras \cite{BB, DP,
penkov:2004, PS}, locally extended affine Lie algebras
\cite{morita-yoshii,neeb:LEALA,n:persp} and Lie superalgebras graded
by locally finite root systems \cite{n:3g,gn2}. These types of Lie
algebras and Lie superalgebras have been intensively studied by many
authors, many more than we have quoted, yet no general results seem
to be known about their universal central extensions, besides the
paper \cite{S} in which the author studied a rather special case,
described in Remark~\ref{rem:S}.

In this paper, we consider the universal central extensions of
general direct limits of Lie superalgebras over an arbitrary base
superring. We show in Theorem~\ref{ucedirlim} that the universal
central extension of a direct limit $\dirlim L_i$ of perfect Lie
superalgebras $L_i$ is canonically isomorphic to the direct limit of
the universal central extensions of $L_i$. This result is new even
for the case of Lie algebras. Crucial for its proof is the fact
(\cite{N1}) that one has a endo-functor $\uce$ on the category of
all Lie superalgebras which gives the universal central extension
for perfect Lie superalgebras.

As an application, we describe in \S\ref{sec:2} the universal
central extensions of some direct limit Lie superalgebras, namely
$\lsl(I; A)$ for $|I|\ge 5$ and $A$ an associative superalgebra
(Proposition~\ref{newassprop}, Corollary~\ref{sl2cor}), $\osp(I;A)$
for $A$ commutative associative (Example~\ref{exam:osp}), locally
finite Lie superalgebras (Example~\ref{exam:lfls}) and Lie algebras
graded by locally finite root systems (Example~\ref{exam:rg}). These
applications are possible since one knows the universal central
extension of the Lie superalgebras over which we take the direct
limit.

{\it Acknowledgements.} The authors thank I. Dimitrov who asked one
of us a question about the universal central extensions of certain
locally finite Lie algebras, now answered in Example~\ref{exam:lf}
and pointed out the reference \cite{S}. The first author
gratefully acknowledges helpful discussions with N.~Lam on the topic
of the paper. We also thank V. Serganova for very useful comments on an earlier version of the paper.

\section{Universal central extensions of direct limits of Lie superalgebras:
General results} \label{sec:1}

\begin{blank}\textbf{Review of universal central extensions of
Lie superalgebras.} \label{rev:uce} Throughout this section we
consider Lie superalgebras $L$ over a commutative superring $S$ as
defined in \cite{N1}. Thus $S$ is an associative, unital
$\ZZ/2\ZZ$-graded ring which is commutative in the sense that $s_1
s_2 = (-1)^{|s_1||s_2|}s_2 s_1$ holds for all homogeneous $s_i \in
S$. Here and in the following $|s|$ denotes the degree of a
homogeneous element. Formulas involving the degree function are
supposed to be valid for homogeneous elements -- a condition that we
will not mention explicitly in the following.

We first describe some facts on central extensions which are needed
in the following. Proofs can be found in \cite{N1}. A central
extension of $L$ is an epimorphism $f: K \to L$ of Lie superalgebras
with the property that $\Ker f \subset \gz(K)$, the centre of $K$. A
central extension $f: K\to L$ is called universal if for any other
central extension $f' : K'\to L$ there exists a unique Lie
superalgebra morphism $g: K \to K'$ such that $f = f' \circ g$. A
universal central extension of $L$ exists and is then unique up to a
unique isomorphism if and only if $L$ is perfect. To describe a
model of a universal central extension of $L$ one can use the
following construction of a Lie superalgebra which is  valid for
any, not necessarily perfect Lie superalgebra $L$.

Let $\mathcal{B}=\mathcal{B}_L$ be the $S$-submodule of the
$S$-supermodule $L\otimes_S L$ spanned by all elements of type
\begin{align*}
  x &\otimes y+ (-1)^{|x| |y|}  y\otimes x, \qquad
     x_{\bar{0}} \otimes x_{\bar{0}} \quad \text{for } x_{\bar{0}}\in L_{\bar{0}}, \\
 (-1)^{|x||z|}x &\otimes [y,z]+(-1)^{|y||x|}y\otimes
          [z,x]+(-1)^{|z||y|}z\otimes [x,y],
\end{align*}
and put
$$\uce(L)=(L\otimes_S L)/\mathcal{B}
 \quad \text{and} \quad \la x,y \ra=x\otimes y +\mathcal{B}\in \uce(L).$$
The supermodule $\uce(L)$ becomes a Lie superalgebra over $S$ with
respect to the product
$$
   \big[ \la l_1,l_2 \ra , \, \la l_3, l_4\ra \big] = \big\la
      [l_1,l_2],\, [l_3,l_4]\big\ra
$$
for $l_i \in L$. The map
\begin{equation} \label{rev:uce1}
  \mathfrak{u}=\mathfrak{u}_L:\uce(L)\rightarrow L: \quad \la x,y\ra \mapsto [x,y]
\end{equation} is a Lie superalgebra morphism with kernel
$\Ker \fru\subset \gz(\uce(L))$. If $L$ is perfect, then
$\mathfrak{u}:\uce(L)\rightarrow L$ is a universal central extension
of $L$. A morphism of Lie superalgebras $f:L\rightarrow M$ gives
rise to a morphism of Lie superalgebras
$$\uce(f):\uce(L)\rightarrow \uce(M): \quad \la l_1,l_2\ra \mapsto
    \la f(l_1),f(l_2)\ra.$$
The assignments $L \mapsto \uce(L)$ and $f \mapsto \uce(f)$ define a
covariant endo-functor on the category $\mathbf{Lie}_S$ of Lie
$S$-superalgebras.

Similar to Lie algebras, a central extension of a Lie
$S$-superalgebra $L$ can be constructed by using a $2$-cocycle $\ta
: L \times L \to C$. Here $C$ is a $S$-supermodule, $\ta$ is
$S$-bilinear of degree $0$ whence $\ta(L_\al, L_\be) \subset
C_{\al+\be}$ for $\al,\be \in \ZZ/2\ZZ$, alternating in the sense
that $\ta(x, y) + (-1)^{|x||y|} \ta (y,x) = 0= \ta(x_{\bar 0},
x_{\bar 0})$ for $x_{\bar 0} \in L_{\bar 0}$, and satisfies
$$(-1)^{|x||z|} \ta(x, [y,z]) +(-1)^{|y||x|}\ta(y, [z,x])
+(-1)^{|z||y|}\ta(z, [x,y])=0.$$ Equivalently, a $2$-cocycle is a
map $\ta : L \times L \to C$ such that $L\oplus C$ is a Lie
superalgebra with respect to the grading $(L\oplus C)_\al = L_\al
\oplus C_\al$ and product $[l_1\oplus c_1, l_2\oplus
c_2]=[l_1,l_2]_L\oplus \tau(x_1,x_2)$ where $[.,.]_L$ is the product
of $L$. In this case, the canonical projection $L\oplus C \to L$ is
a central extension.
\end{blank}

\begin{blank} \textbf{Review of direct limits.} We recall some notions regarding
direct limits.  Let $(I, \leq)$ be a directed set, which will be
fixed throughout this section. A {\it directed system\/} is a family $(L_{i} :
i \in I)$ in $\mathbf{Lie}_S$ together with Lie superalgebra
morphisms $f_{ji}: L_{i}\rightarrow L_{j}$
for every pair $(i,j)$ with $i \le j$ such that 
$f_{ii}=\Id_{L_{i}}$ and $f_{ki}=f_{kj}\circ f_{ji}$ for $i\leq
j\leq k$. A {\it direct limit\/} of the directed system
$(L_{i},f_{ji})$ is a Lie superalgebra $L$ 
together with Lie superalgebra morphisms $\vphi_{i}:L_{i}\rightarrow
L$ satisfying 
$\vphi_{i}=\vphi_{j}\circ f_{ji}$, and 
for any other such pair $(Y,\psi_{i})$, i.e., $\psi_i = \psi_j \circ
f_{ji}$ for $i\le j$, there exists a unique morphism
$\varphi:L\rightarrow Y$ such that the following diagram commutes.
$$\xymatrix{
 L_{i} \ar[rr]^{f_{ji}} \ar[dr]^{\vphi_{i}}\ar@/_/[ddr]_{\psi_{i}} &
   & L_{j} \ar[dl]_{\vphi_{j}}\ar@/^/[ddl]^{\psi_{j}}\\
    &L \ar@{-->}[d]^{\varphi}& \\
& Y&}
$$ The usual construction of a direct limit of modules shows
that a direct limit of Lie superalgebras exists in $\mathbf{Lie}_S$
and is unique, up to a unique isomorphism. We can therefore speak of
``the'' direct limit, and follow the usual abuse of notation and
denote a direct limit of $(L_i, f_{ji})$ by $\varinjlim L_i$. We
will call $\vphi_i$ the {\it canonical maps}.

Let $(K_{i},g_{ji})$ and $( L_{i},f_{ji})$ be two directed systems
of Lie superalgebras, both indexed by the directed set $I$. A
\textit{morphism} from $( K_{i},g_{ji})$ to $( L_{i},f_{ji})$ is a
family $(h_{i} : i \in I)$ of Lie superalgebra morphisms
$h_{i}:K_{i} \rightarrow L_{i}$ such that for all pairs $(i,j)$ with
$i\leq j$ the diagram
$$\xymatrix{K_{i}\ar[r]^{g_{ji}}\ar[d]_{h_{i}}&K_{j}\ar[d]^{h_{j}}\\
L_{i}\ar[r]^{f_{ji}}&L_{j}}$$ commutes. A morphism from $( K_{i},
g_{ji})$ to $( L_{i}, f_{ji})$ gives rise to a unique Lie
superalgebra morphism
$$h = \dirlim h_i :\dirlim K_{i}\longrightarrow\dirlim L_{i}$$
such that $h\circ\vphi_{i}=\psi_{i}\circ h_{i}$ for all $i\in I$,
where $\vphi_{i}: K_{i}\rightarrow \dirlim K_{i}$ and $\psi_{i}:
L_{i}\rightarrow \dirlim L_{i}$ are the canonical maps. Since direct limits preserve exact
sequences \cite[II, \S6.2, Prop.~3]{bou:A}, it follows that $h$ is
injective (respectively surjective) if all $h_{i}$ are injective
(respectively surjective).
\end{blank}

\begin{blank}\label{setti} Let $(L_{i},f_{ji})$ be a directed system
of Lie superalgebras in $\mathbf{Lie}_S$ and let $\dirlim  L_{i}$ be
its direct limit with canonical maps
$\vphi_{i}:L_{i}\rightarrow\dirlim L_i$. Since $\uce$ is a covariant
functor, it is immediate that $(\uce(L_{i}),\uce(f_{ji}))$ is also a
directed system of Lie superalgebras. We abbreviate $\uce(f_{ji})$
by $\widehat{f}_{ji}$, and let
$\widetilde{\vphi}_{i}:\uce(L_{i})\rightarrow\dirlim \uce(L_{i})$ be
the canonical maps into the direct limit of $(
\uce(L_{i}),\widehat{f}_{ji})$. \begin{equation}
  \label{comm}
\xymatrix{L_{i} \ar[rr]^{f_{ji}} \ar[dr]_{\vphi_{i}}& &
       L_{j} \ar[dl]^{\vphi_{j}}\\
     &\dirlim  L_{i} }
\quad \xymatrix{ \\ \rightsquigarrow\\ }\quad
 \xymatrix{\uce(L_{i}) \ar[rr]^{\widehat{f}_{ji}}
    \ar[dr]_{\widetilde{\vphi}_{i}}& &
       \uce(L_{j}) \ar[dl]^{\widetilde{\vphi}_{j}}\\
     & \dirlim \uce(L_i) }
\end{equation} Let $\fru_i : \uce(L_i) \to L_i$ be the Lie
superalgebra morphism of \eqref{rev:uce1}. By construction of the
maps $\widehat{f}_{ji}$, we have a commutative diagram
\begin{equation}\label{uhom}
\vcenter{
\xymatrix{\uce(L_{i})\ar[r]^{\widehat{f}_{ji}}\ar[d]_{\mathfrak{u}_{i}}
     &\uce(L_{j})\ar[d]^{\mathfrak{u}_{j}}\\
        L_{i}\ar[r]^{f_{ji}}&L_{j}}
} \end{equation} for $i \le j$. In other words, the family $(\fru_i,
i\in I)$ is a morphism from the directed system $(\uce(L_i),
\widehat{f}_{ji})$ to the directed system $(L_i, f_{ji})$, and
therefore gives rise to a morphism \begin{equation} \label{revdir1}
  \dirlim \fru_i : \dirlim \uce(L_i) \to  \dirlim L_i. \end{equation}
\end{blank}

\begin{lemma}\label{cedirlim}In the setting of\/ {\rm \ref{setti}},
the map \eqref{revdir1} has central kernel, and is a central
extension if all  $L_{i}$ are perfect.
\end{lemma}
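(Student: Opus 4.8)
The plan is to treat the two assertions separately, the surjectivity being easy and the statement about the kernel being the real content (and, notably, not needing perfectness).

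\emph{Surjectivity in the perfect case.} If every $L_i$ is perfect, then each $\fru_i\colon\uce(L_i)\to L_i$ is a universal central extension, in particular an epimorphism. Since the direct limit of a morphism of directed systems is surjective when all its components are (exactness of direct limits, as recalled in \ref{setti}'s preamble), the map $\dirlim\fru_i$ of \eqref{revdir1} is surjective. Together with the kernel statement below, this shows it is a central extension.

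\emph{Key observation.} From the commutative square \eqref{uhom}, $\fru_j\circ\widehat f_{ji}=f_{ji}\circ\fru_i$, so for $i\le j$ the transition map $\widehat f_{ji}$ carries $\Ker\fru_i$ into $\Ker\fru_j$; and $\Ker\fru_i\subset\gz(\uce(L_i))$ for every $i$ by \eqref{rev:uce1}. Thus, although an arbitrary Lie superalgebra morphism need not preserve centrality, the transition maps of our system do send ``kernel--central'' elements to ``kernel--central'' elements. This is the one point that requires care.

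\emph{The kernel is central.} Every element of $\dirlim\uce(L_i)$ is of the form $\widetilde\vphi_i(\zeta)$ for some $i\in I$ and $\zeta\in\uce(L_i)$. Suppose $\widetilde\vphi_i(\zeta)\in\Ker(\dirlim\fru_i)$. Since $(\dirlim\fru_i)\circ\widetilde\vphi_i=\vphi_i\circ\fru_i$, we get $\vphi_i(\fru_i(\zeta))=0$, so there is $k\ge i$ with $f_{ki}(\fru_i(\zeta))=0$; by \eqref{uhom}, $\fru_k(\widehat f_{ki}(\zeta))=0$, i.e. $\zeta':=\widehat f_{ki}(\zeta)\in\Ker\fru_k$, and $\widetilde\vphi_i(\zeta)=\widetilde\vphi_k(\zeta')$. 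Now let $w=\widetilde\vphi_j(\eta)$ be an arbitrary element of $\dirlim\uce(L_i)$ and pick $m\ge k,j$. By the key observation, $\widehat f_{mk}(\zeta')\in\Ker\fru_m\subset\gz(\uce(L_m))$, so $[\widehat f_{mk}(\zeta'),\widehat f_{mj}(\eta)]=0$ in $\uce(L_m)$. Applying the Lie superalgebra morphism $\widetilde\vphi_m$ and using $\widetilde\vphi_m\circ\widehat f_{mk}=\widetilde\vphi_k$ and $\widetilde\vphi_m\circ\widehat f_{mj}=\widetilde\vphi_j$, we obtain $[\widetilde\vphi_k(\zeta'),w]=0$. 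Hence $\widetilde\vphi_i(\zeta)=\widetilde\vphi_k(\zeta')$ lies in $\gz(\dirlim\uce(L_i))$, which proves $\Ker(\dirlim\fru_i)\subset\gz(\dirlim\uce(L_i))$.

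The main obstacle is precisely the preservation of centrality along the system: centrality of $\zeta$ in $\uce(L_i)$ is of no direct use, so one must first push $\zeta$ forward into a kernel $\Ker\fru_k$ and then exploit that $\widehat f_{mk}$ maps $\Ker\fru_k$ into $\Ker\fru_m\subset\gz(\uce(L_m))$, where the bracket can finally be evaluated. Beyond this, the argument is just bookkeeping with the universal properties of the two direct limits and the commuting diagrams \eqref{comm} and \eqref{uhom}.
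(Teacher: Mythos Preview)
Your proof is correct and follows essentially the same route as the paper's: write a kernel element as $\widetilde\vphi_i(\zeta)$, push it forward along the directed system until its image lies in some $\Ker\fru_k\subset\gz(\uce(L_k))$, then use a common upper bound to compare with an arbitrary element and conclude the bracket vanishes. The only cosmetic difference is that you first pass to $\zeta'\in\Ker\fru_k$ and then to a further index $m$, whereas the paper goes directly to a single index $q\ge k,p$; the content is identical.
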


\begin{proof} To prove that $\frv := \dirlim
\fru_i$ has central kernel,  let $x\in \Ker \frv$. Thus
$x=\widetilde{\vphi}_{j}(x_{j})$ for some $x_{j}\in \uce(L_{j})$ and
$0 = \frv(x)=\vphi_j(\mathfrak{u}_{j}(x_{j})) $ in $L=\dirlim L_i$.
Hence there exists $k\geq j$ such that
$f_{kj}(\mathfrak{u}_{j}(x_{j}))=0\in L_{k}$. Note
$\vphi_k(f_{kj}(\mathfrak{u}_{j}(x_{j})))=0\in L$. For any $y\in L$,
we have to show that $[x,y]=0$ in $L$. We have
$y=\widetilde{\vphi}_{p}(y_{p})$ for some $y_{p}\in \uce(L_{p})$.
For the above $k,p\in I$ there exists $q\in I$ such that $q\geq
k\geq j$ and $q\geq p$. Thus
$f_{qj}(\mathfrak{u}_{j}(x_{j}))=(f_{qk}\circ
f_{kj})(\mathfrak{u}_{j}(x_{j}))=0\in L_{q}$. The commutative
diagram \eqref{uhom} for $j\le q$ now implies
$\widehat{f}_{qj}(x_{j})\in \Ker \mathfrak{u}_{q}\subset
\goth{z}(\uce(L_{q}))$. So we have
$[\widehat{f}_{qj}(x_{j}),\widehat{f}_{qp}(y_{p})]_{\uce(L_{q})}=0\in\uce(L_{q})$
and hence
\begin{eqnarray*}
[x,y]_{\dirlim \uce(L_{i})}
&=&[\widetilde{\vphi}_{j}(x_{j}),\widetilde{\vphi}_{p}(y_{p})]
 _{\dirlim \uce(L_{i})}\\
 &=&\widetilde{\vphi}_{q}([\widehat{f}_{qj}(x_{j}),
\widehat{f}_{qp}(y_{p})]_{\uce(L_{q})})=0.
\end{eqnarray*}
Thus $\Ker \frv \subset \goth{z}(\dirlim \uce(L_{i}))$. If all $L_i$
are perfect, every $\fru_i$ is surjective, and hence so is $\frv $,
proving that $\frv$ is a central extension.
\end{proof}

\begin{blank} We continue with the setting of \ref{setti}, but assume that
every $L_{i}$ is perfect. Then $L=\dirlim L_i$ is perfect too and
therefore has a universal central extension $\fru : \uce(L)\to L$.
Our goal is to prove that the central extension \eqref{revdir1} is a
universal central extension of $L$. By the construction of $L$, the
canonical maps $\vphi_i:L_i\rightarrow L$ are Lie superalgebra
morphisms. We therefore get a unique Lie superalgebra morphism
$\widehat{\vphi}_i:\uce(L_i)\rightarrow \uce(L)$ such that the
following diagram commutes
\begin{equation}\label{uhomL}
\vcenter{
\xymatrix{\uce(L_{i})\ar[r]^{\widehat{\vphi}_{i}}\ar[d]_{\mathfrak{u}_{i}}
     &\uce(L)\ar[d]^{\mathfrak{u}}\\
        L_{i}\ar[r]^{\vphi_{i}}&L}
} \end{equation} where $\fru_i$ and $\fru$ are universal central
extensions of $L_i$ and $L$ respectively. Applying the covariant
functor $\uce$ to the left commutative diagram in \eqref{comm} shows
that $\hat \vphi_i = \uce(\vphi_i) = \uce(\vphi_j \circ f_{ji}) =
\uce(\vphi_j) \circ \uce(f_{ji}) = \hat \vphi_j \circ \hat f_{ji}$.
Thus the outer triangle in the diagram below commutes. Hence, by the
universal property of $\dirlim \uce(L_{i})$, there exists a unique
Lie superalgebra morphism $\varphi:\dirlim \uce(L_{i})\rightarrow
\uce(L)$ such that all triangles commute. \begin{equation}
\label{ucediag1} \vcenter{ \xymatrix{\uce(L_{i})
\ar[rr]^{\widehat{f}_{ji}} \ar[dr]^{\widetilde{\vphi}_{i}}
   \ar@/_/[ddr]_{\widehat{\vphi}_{i}} & & \uce(L_{j})
   \ar[dl]_{\widetilde{\vphi}_{j}}\ar@/^/[ddl]^{\widehat{\vphi}_{j}}\\
&\dirlim \uce(L_{i}) \ar@{-->}[d]^{\varphi}& \\
& \uce(L)&} } \end{equation}
\end{blank}

For the next theorem we define $\rmH_2(L)$ for a perfect Lie
superalgebra $L$ as the kernel of $\fru : \uce(L) \to L$. In case
$L$ is a perfect Lie algebra over a ring $S$ it is known that
$\rmH_2(L)$ is the second homology group of $L$ with trivial
coefficients.

\begin{theorem}\label{ucedirlim} Assume that all Lie superalgebras $L_{i}$
are perfect. Then the map $$\vphi: \dirlim \uce(L_i) \to
\uce(\dirlim L_i)$$ of \eqref{ucediag1} is an isomorphism of Lie
superalgebras, and hence $\dirlim \fru_i : \dirlim \uce(L_i) \to
\dirlim L_i$ is a universal central extension. In
particular, $\dirlim \fru_i$ induces an isomorphism
\begin{equation} \label{dirlimh2}
 \dirlim \rmH_2(L_i)  \cong \rmH_2 (\dirlim L_i).
\end{equation}
\end{theorem}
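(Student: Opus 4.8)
The plan is to produce an explicit two-sided inverse of $\vphi$ and then read off the remaining assertions. Two elementary observations will be used throughout. First, if $M$ is a perfect Lie $S$-superalgebra then $\uce(M)$ is perfect: writing $x=\sum_a[a,a']$ and $y=\sum_b[b,b']$ (possible since $M$ is perfect), the multiplication rule of $\uce(M)$ gives $\la x,y\ra=\sum_{a,b}\big[\la a,a'\ra,\,\la b,b'\ra\big]\in[\uce(M),\uce(M)]$, and such elements span $\uce(M)$. Second, a direct limit of perfect Lie superalgebras is perfect, since the images of the canonical maps span the limit and each such image equals its own derived algebra, hence lies in the derived algebra of the limit. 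In particular $\dirlim\uce(L_i)$ is perfect, and likewise $\dirlim L_i$ is perfect, so $\fru\colon\uce(\dirlim L_i)\to\dirlim L_i$ is defined and is a universal central extension.

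First I would record the compatibility $\fru\circ\vphi=\dirlim\fru_i$. Precomposing both sides with the canonical maps $\widetilde{\vphi}_i\colon\uce(L_i)\to\dirlim\uce(L_i)$ and using $\vphi\circ\widetilde{\vphi}_i=\widehat{\vphi}_i$ from \eqref{ucediag1} together with $\fru\circ\widehat{\vphi}_i=\vphi_i\circ\fru_i$ from \eqref{uhomL}, one finds that $\fru\circ\vphi$ and $\dirlim\fru_i$ agree on every $\widetilde{\vphi}_i(\uce(L_i))$; since these subspaces generate $\dirlim\uce(L_i)$, the two morphisms coincide.

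Next I would build the inverse. By Lemma~\ref{cedirlim}, $\dirlim\fru_i$ is a central extension of $\dirlim L_i$, so the universal property of $\fru$ furnishes a unique morphism $g\colon\uce(\dirlim L_i)\to\dirlim\uce(L_i)$ with $(\dirlim\fru_i)\circ g=\fru$. Then $\fru\circ(\vphi\circ g)=(\dirlim\fru_i)\circ g=\fru$, so the uniqueness clause in the universal property of $\fru$, applied to the central extension $\fru$ itself, forces $\vphi\circ g=\Id$. For the other composite, $(\dirlim\fru_i)\circ(g\circ\vphi)=\fru\circ\vphi=\dirlim\fru_i=(\dirlim\fru_i)\circ\Id$. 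Here I would invoke the elementary fact that two Lie superalgebra morphisms $\alpha,\beta$ out of a \emph{perfect} Lie superalgebra into the total space of a central extension $f$ with $f\circ\alpha=f\circ\beta$ must coincide: the difference $\alpha-\beta$ takes values in the central kernel of $f$, so $\alpha$ and $\beta$ agree on all brackets and hence everywhere. Applying this with $f=\dirlim\fru_i$, $\alpha=g\circ\vphi$, $\beta=\Id$, and using that $\dirlim\uce(L_i)$ is perfect, gives $g\circ\vphi=\Id$. Thus $\vphi$ is an isomorphism.

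It then follows at once that $\dirlim\fru_i=\fru\circ\vphi$ is a universal central extension of $\dirlim L_i$, being the composite of an isomorphism with the universal central extension $\fru$. For the homology assertion I would apply the exact functor $\dirlim$ (\cite[II, \S6.2, Prop.~3]{bou:A}) to the short exact sequences $0\to\rmH_2(L_i)\to\uce(L_i)\xrightarrow{\fru_i}L_i\to0$ (exact because each $L_i$ is perfect), whose transition maps are the restrictions of the $\widehat{f}_{ji}$ by \eqref{uhom}; this yields $\dirlim\rmH_2(L_i)=\Ker(\dirlim\fru_i)=\vphi^{-1}(\Ker\fru)=\vphi^{-1}(\rmH_2(\dirlim L_i))$, so $\vphi$ restricts to the desired isomorphism $\dirlim\rmH_2(L_i)\cong\rmH_2(\dirlim L_i)$. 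The one step I expect to require real care is $g\circ\vphi=\Id$: since $\dirlim\fru_i$ is not yet known to be a universal central extension, one cannot invoke uniqueness of lifts through it, and the argument must instead pass through perfectness of $\dirlim\uce(L_i)$ as above.
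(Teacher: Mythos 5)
Your proof is correct and follows essentially the same route as the paper: both construct the inverse of $\vphi$ from the universal property of $\uce(\dirlim L_i)$ applied to the central extension $\dirlim\fru_i$ of Lemma~\ref{cedirlim}, verify the two composites via the uniqueness of morphisms from a perfect superalgebra into a central extension (this is exactly \cite[Prop.~1.13]{N1}, which you reprove inline), and obtain \eqref{dirlimh2} from exactness of $\dirlim$. The only cosmetic difference is that for $g\circ\vphi=\Id$ you apply that uniqueness directly on the perfect algebra $\dirlim\uce(L_i)$, whereas the paper first reduces to the canonical maps $\widetilde\vphi_i$ and applies it on each $\uce(L_i)$.
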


\begin{proof} We have already noted that $L$ is perfect and therefore
has a universal central extension $\fru : \uce(L)\to L$. By Lemma
\ref{cedirlim} we know that  $\frv=\dirlim \fru_i : \dirlim
\uce(L_i) \to \dirlim L_i$ is a central extension. Thus the
universal property of $\uce(L)$ implies that there exists a unique
Lie superalgebra morphism $\psi:\uce(L)\rightarrow \dirlim
\uce(L_{i})$ such that the following diagram commutes.
$$\vcenter{\xymatrix{
   \uce(L) \ar[rr]^{\psi} \ar[dr]_ {\fru}
     & & \dirlim \uce(L_{i}) \ar[dl]^\frv \\
    &L&}}$$
We claim that $\psi\circ \varphi=\rm{Id}_{\dirlim \uce(L_{i})}$ and
$\varphi\circ \psi=\rm{Id}_{\uce(L)}$. For the proof of these two
equations, the following diagram may be helpful. \[
\xymatrix{
    \uce(L_i) \ar[r]^{\widetilde \vphi_i} \ar[d]_{\fru_i}
         \ar@/^1.5pc/[rr]^{\widehat \vphi_i}
       & \dirlim \uce(L_i) \ar@<0.5ex>[r]^\vphi \ar[dr]_\frv
        & \uce(L) \ar@<0.5ex>[l]^<<<<<\psi \ar[d]^\fru \\
   L_i \ar[rr]_{\vphi_i} && L
 }\] By the universal property of $\dirlim \uce(L_{i})$, in order to
show $\psi\circ \varphi=\rm{Id}_{\dirlim \uce(L_{i})}$, we only need
to check
$(\psi\circ\vphi)\circ\widetilde{\vphi}_i=\widetilde{\vphi}_i$.
Since $\vphi\circ\widetilde{\vphi}_i=\widehat{\vphi}_i$, we are left
to check $\psi\circ \widehat{\vphi}_i=\widetilde{\vphi}_i $ and this
is true by the observation $\frv\circ
\psi\circ\widehat{\vphi}_i=\fru\circ \widehat{\vphi}_i=\vphi_i \circ
\fru_i=\frv\circ \widetilde{\vphi}_i$ and the uniqueness in
\cite[Prop.~1.13]{N1}. For the proof of $\varphi\circ
\psi=\rm{Id}_{\uce(L)}$ it is in view of the universal property of
$\uce(L)$ enough to verify $\fru \circ (\vphi\circ \psi)=\fru$.
Since $\fru=\frv\circ \psi$, we are left to check $\fru\circ
\vphi=\frv$ and this follows from $\fru\circ \vphi \circ
\widetilde{\vphi}_i=\fru\circ \widehat{\vphi}_i=\vphi_i \circ \fru_i
=\frv\circ \widetilde{\vphi}_i$.

For the proof of \eqref{dirlimh2} it suffices to note that $(\Ker
\fru_i, \widehat f_{ji}|_{\Ker \fru_i})$ is a directed system and
that
$$
   0 \to \Ker \fru_i \to \uce(L_i) \to L_i \to 0
 $$
is exact for every $i\in I$. The claim then follows from the fact
that direct limits preserve exact sequences. \end{proof}

\begin{remark} \label{rem:S} Theorem~\ref{ucedirlim} is proven in \cite[App.]{S} for the
case $I=\mathbb{N}$ with the natural order and a directed system of
Lie algebras over algebraically closed fields of characteristic zero
$L_0 \xrightarrow{f_0}  L_1 \xrightarrow{f_1} \cdots$ satisfying the
condition that all $f_i$ are monomorphisms with $f_i\big(
\gz(L_i)\big) \subset \gz(L_{i+1})$.
\end{remark}

\begin{remark} 
We note that for Lie algebras the formula \eqref{dirlimh2} is not 
new. Indeed, by \cite[Cor.~7.3.6]{We} the homology groups of a Lie 
algebra can be interpreted as torsion groups for the universal 
enveloping algebra and it is known that torsion commutes with direct 
limits, see e.g., \cite[Cor. 2.6.17]{We}. The proof presented here 
is more direct and works in the super setting as well.
\end{remark}

For the next corollary we recall that a perfect Lie superalgebra is
called {\it centrally closed\/} if $\fru : \uce(L) \to L$ is an
isomorphism.

\begin{cor}\label{centrally closed} If $( L_i, f_{ji} )$ is a directed
system of perfect and centrally closed Lie superalgebras, then
$\dirlim L_i$ is perfect and centrally closed.
\end{cor}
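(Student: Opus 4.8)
The plan is to derive this as a direct consequence of Theorem~\ref{ucedirlim}. First I would observe that $\dirlim L_i$ is perfect: this is already noted in the proof of Theorem~\ref{ucedirlim} (a direct limit of perfect Lie superalgebras is perfect, since every element of $\dirlim L_i$ comes from some $L_i$, is a sum of brackets there, and the canonical maps are Lie superalgebra morphisms). So $\dirlim L_i$ has a universal central extension $\fru : \uce(\dirlim L_i) \to \dirlim L_i$, and it remains to show $\fru$ is an isomorphism.

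Next I would apply Theorem~\ref{ucedirlim} to conclude that $\vphi : \dirlim \uce(L_i) \to \uce(\dirlim L_i)$ is an isomorphism and that $\dirlim \fru_i : \dirlim \uce(L_i) \to \dirlim L_i$ is a universal central extension. Since each $L_i$ is centrally closed, each $\fru_i : \uce(L_i) \to L_i$ is an isomorphism, hence $\dirlim \fru_i$ is an isomorphism (direct limits of isomorphisms are isomorphisms, e.g.\ because they preserve exact sequences, so injectivity and surjectivity of the $\fru_i$ pass to the limit, as recorded in the review of direct limits). Then $\fru = \frv \circ \psi = (\dirlim \fru_i) \circ \psi$ from the commuting triangle in the proof of Theorem~\ref{ucedirlim}, and $\psi = \vphi^{-1}$ there; alternatively, simply note that $\fru$ is a central extension whose total space $\uce(\dirlim L_i)$ is isomorphic via $\vphi^{-1}$ to $\dirlim \uce(L_i) \cong \dirlim L_i$, and the composite $\dirlim L_i \to \dirlim L_i$ it induces is an isomorphism. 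Either way $\fru$ is an isomorphism, i.e.\ $\dirlim L_i$ is centrally closed.

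I do not anticipate a real obstacle here; the corollary is essentially a bookkeeping exercise once Theorem~\ref{ucedirlim} is in hand. The one small point worth stating carefully is the passage from ``each $\fru_i$ an isomorphism'' to ``$\dirlim \fru_i$ an isomorphism'', which uses exactly the fact quoted in the review of direct limits that direct limits preserve injectivity and surjectivity of morphisms of directed systems (equivalently, preserve exact sequences). I would phrase the whole argument in two or three sentences.

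\begin{proof}
Since each $L_i$ is perfect, $L := \dirlim L_i$ is perfect, and hence has a universal central extension $\fru : \uce(L) \to L$. By Theorem~\ref{ucedirlim} the map $\vphi : \dirlim \uce(L_i) \to \uce(L)$ is an isomorphism and $\frv = \dirlim \fru_i : \dirlim \uce(L_i) \to L$ is a universal central extension; moreover $\fru \circ \vphi = \frv$ by the last identity established in the proof of that theorem. Because each $L_i$ is centrally closed, each $\fru_i : \uce(L_i) \to L_i$ is an isomorphism, and therefore $\frv = \dirlim \fru_i$ is an isomorphism, direct limits preserving injectivity and surjectivity of morphisms of directed systems. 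Consequently $\fru = \frv \circ \vphi^{-1}$ is an isomorphism, which is to say that $L = \dirlim L_i$ is centrally closed.
\end{proof}
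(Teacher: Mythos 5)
Your proof is correct and follows exactly the intended route: the paper states this corollary without proof as an immediate consequence of Theorem~\ref{ucedirlim}, and your argument (each $\fru_i$ an isomorphism $\Rightarrow$ $\dirlim \fru_i$ an isomorphism, combined with $\fru\circ\vphi=\frv$ and $\vphi$ an isomorphism) is precisely the bookkeeping the authors leave to the reader.
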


\bigskip

\section{Examples:
 Universal central extensions of some infinite rank Lie superalgebras}
\label{sec:2}

In this section we will consider some examples of universal central
extensions of direct limits Lie superalgebras, mainly those which
are direct limits of some of the classical Lie superalgebras. In
order
to use the known results on their universal central extensions, we will in this section assume that all Lie superalgebras are
defined over a commutative, associative, unital ring $k$, rather
than an arbitrary base superring as in Section~\ref{sec:1}.

\begin{example}[Special linear Lie superalgebra $\lsl(I;A)$ for $A$ an associative
superalgebra] \label{exam:new} Let $I=I_{\bar 0} \cup I_{\bar 1}$ be
a superset, i.e., a partitioned set. Let $A$ be a unital
associative, but not necessarily commutative $k$-superalgebra. We
denote by $\Mat(I;A)$ the associative $k$-superalgebra whose
underlying module consists of $|I|\times |I|$-finitary matrices with entries
from $A$ (only finitely many non-zero entries) and $\ZZ_2$-grading given by $ |E_{ij}(a)| = |i| + |j| + |a|$. Here $E_{ij}(a)\in \Mat(I;A)$ has
entry $a$ at the position $(ij)$ and $0$ elsewhere. The product of
$\Mat(I;A)$ is the usual matrix multiplication. Clearly $\Mat(I;A)$
only depends on the cardinality of $I$ (and of course on $A$). For a
finite $I$ we put
$$
\Mat(m,n;A) := \Mat(I;A) \quad
   \hbox{if $|I_{\bar 0}| = m$ and $|I_{\bar 1}|=n$.}
$$
 A matrix $x\in \Mat(m,n;A)$ written as
\begin{equation} \label{exam:new2}
 x= \begin{array}{cc}
        \left[\begin{array}{cc} x_1 & x_2 \\ x_3 & x_4 \end{array} \right]
        \begin{array}{c} m \\ n \end{array} \\
        \begin{array}{c c c} m & n\quad & \end{array}
   \end{array}
\end{equation} is then even (resp. odd) if $x_1$ and $x_4$ are matrices with even
entries (resp. odd) entries and $x_2$ and $x_3$ are matrices with
odd (resp. even) entries.

We let $\lgl(I;A)$ be the Lie superalgebra associated to the
associative superalgebra $\Mat(I;A)$. Its product is $[x,y] = xy -
(-1)^{|x||y|}yx $. We assume $|I| \ge 3$ and then have that the Lie
superalgebra
$$
   \lsl(I;A) := [\lgl(I;A), \,  \lgl(I;A)]
$$
is perfect and satisfies $
 \lsl(I;A) = \{ x\in \Mat(I;A) : \str(x) \in [A,A]\}$.
Here the (super)trace $\str$ of a matrix $x=(x_{ij})\in \Mat(I;A)$
is given by $\str(x) = \sum_{i\in I_{\bar 0}} x_{ii} - \sum_{i\in
I_{\bar 1}} x_{ii}$, where $[A,A]$ is the span of all commutators
$[a_1, a_2] = a_1a_2 - (-1)^{|a_1||a_2|} a_2a_1$, $a_i \in A$. Observe that the Lie superalgebra
$\lsl(I;A)$ is generated by matrices $E_{ij}(a)$, $i\ne j \in I$,
$a\in A$, and these generators satisfy the relations
\begin{eqnarray}
  &a \mapsto E_{ij}(a) \quad \hbox{is $k$-linear;} \label{exam:new3}
  \\
  &[E_{ij}(a), E_{pq}(b)] = \delta_{jp} E_{iq}(ab) -
       (-1)^{|E_{ij}(a)||E_{pq}(b)|} \delta_{iq}E_{pj}(ba)
        \label{exam:new4}
\end{eqnarray}

For $I$ as above we define the (linear) {\em Steinberg Lie
superalgebra\/} $\st(I;A)$ as the Lie $k$-superalgebra presented by
generators $e_{ij}(a)$ with $i,j\in I, i\ne j$, $a\in A$ and
relations \eqref{exam:new3}--\eqref{exam:new4} with $E_{ij}(a)$
replaced by $e_{ij}(a)$. We then have a canonical Lie superalgebra
epimorphism
$$
  \frv_I : \st(I;A) \to \lsl(I;A), \quad e_{ij}(a) \mapsto E_{ij}(a).
$$
If $|I_{\bar 0}|=m$ and $|I_{\bar 1}| = n$, we put $\st(m,n;A) =
\st(I;A)$, $\lsl(m,n;A) = \lsl(I;A)$ and $\frv_{mn} : \st(m,n;A) \to
\lsl(m,n;A)$ for $v_I$.

We also need the first cyclic homology group $\HC_1(A)$. To define
it, we use
$$
      \ll A , A\gg \; = (A\ot_k A) / \mathcal{H}
$$ where $\mathcal{H}$ is the span of all elements of type
\begin{align*}
  a &\otimes b+ (-1)^{|a| |b|}  b\otimes a, \quad
     a_{\bar{0}} \otimes a_{\bar{0}} \quad \text{for } a_{\bar{0}}\in A_{\bar{0}}, \\
 (-1)^{|a||c|} a &\otimes bc +(-1)^{|b||a|} b\otimes
          ca +(-1)^{|c||b|} c\otimes ab
\end{align*}
for $a,b,c\in A$. We abbreviate $\ll a,b \gg \, = a\ot b +
\mathcal{H}$. Observe that there is a well-defined commutator map
$$
   \gc : \; \ll A, A\gg \; \to A, \quad \ll a,b\gg \,\mapsto [a,b].
$$
We put
$$
  \HC_1(A) = \Ker \gc = \{ \textstyle \sum_i \ll a_i, b_i \gg
   \;  : \sum_i [a_i, b_i] = 0 \}$$
We will use the following assumption:
\begin{equation}\label{exam:new5}
 \hbox{\it $\frv_F $ for $5\le |F|< \infty $ is a universal central
 extension with $\Ker \frv_F\cong \HC_1(A)$.}
\end{equation}
The assumption \eqref{exam:new5} is true in any one of the following
situations:
\begin{itemize}
  \item[(a)] $n=0$, $A$ an algebra \cite{KL} or a superalgebra \cite{CG},

  \item[(b)] $A$ an algebra \cite{MP1, IK}. \end{itemize}
The references \cite{CG} and \cite{IK} assume that $k$ is a
commutative ring containing $\frac{1}{2}$ and that the underlying
module of $A$ is free with a basis containing the identity element
of $A$. We note that \eqref{exam:new5} is not true for $m+n \le 4$, see the
papers \cite{CG, G, GS, G2} which deal with the case $3\le m+n \le
4$.\end{example}

\begin{prop}\label{newassprop}  Assume\/ \eqref{exam:new5} holds and $I$ is a (possibly infinite) set with
$|I|\ge 5$. Then $\frv_I : \st(I;A) \to \lsl(I;A)$ is a universal
central extension with kernel isomorphic to $\HC_1(A)$.
\end{prop}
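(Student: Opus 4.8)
The plan is to realize $\st(I;A)$ and $\lsl(I;A)$ as direct limits over the directed set of finite subsets $F\subseteq I$ with $|F|\ge 5$, ordered by inclusion, and then apply Theorem~\ref{ucedirlim}. First I would fix the directed set $\mathcal{I}=\{F\subseteq I : |F|<\infty,\ |F|\ge 5\}$ (which is indeed directed since $I$ has at least $5$ elements and the union of two finite sets is finite), and for $F\subseteq F'$ let $f_{F'F}:\lsl(F;A)\to\lsl(F';A)$ and $g_{F'F}:\st(F;A)\to\st(F';A)$ be the obvious inclusions $E_{ij}(a)\mapsto E_{ij}(a)$, resp.\ $e_{ij}(a)\mapsto e_{ij}(a)$; the latter is well-defined because the defining relations \eqref{exam:new3}--\eqref{exam:new4} for $\st(F;A)$ are among those for $\st(F';A)$. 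One checks that $\lsl(I;A)=\dirlim_F \lsl(F;A)$ and $\st(I;A)=\dirlim_F \st(F;A)$: every element of $\lsl(I;A)$ (resp.\ $\st(I;A)$) is a finite sum of terms $E_{ij}(a)$ (resp.\ $e_{ij}(a)$) and hence lies in the image of some $\lsl(F;A)$ (resp.\ $\st(F;A)$), and the universal property is routine from the presentation. Moreover the family $(\frv_F : F\in\mathcal{I})$ is a morphism of directed systems from $(\st(F;A),g_{F'F})$ to $(\lsl(F;A),f_{F'F})$, so $\dirlim_F \frv_F = \frv_I$.

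Next I would invoke the hypothesis \eqref{exam:new5}: for each $F\in\mathcal{I}$ we have $5\le |F|<\infty$, so $\frv_F:\st(F;A)\to\lsl(F;A)$ is a universal central extension with $\Ker\frv_F\cong\HC_1(A)$. In particular each $\lsl(F;A)$ is perfect and $\st(F;A)\cong\uce(\lsl(F;A))$ canonically, with $\frv_F$ playing the role of $\fru_{\lsl(F;A)}$. Since all $\lsl(F;A)$ are perfect, Theorem~\ref{ucedirlim} applies to the directed system $(\lsl(F;A),f_{F'F})$: the map $\dirlim_F \fru_{\lsl(F;A)} : \dirlim_F \uce(\lsl(F;A)) \to \dirlim_F \lsl(F;A)$ is a universal central extension of $\lsl(I;A)=\dirlim_F\lsl(F;A)$. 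Identifying $\uce(\lsl(F;A))$ with $\st(F;A)$ (compatibly with the transition maps — this uses uniqueness of the universal central extension and that $\uce$ is functorial, so the identifications intertwine $\uce(f_{F'F})$ with $g_{F'F}$), this direct limit is exactly $\frv_I:\st(I;A)\to\lsl(I;A)$. Hence $\frv_I$ is a universal central extension; in particular $\lsl(I;A)$ is perfect.

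Finally, for the kernel I would use the isomorphism \eqref{dirlimh2} of Theorem~\ref{ucedirlim}, giving $\Ker\frv_I = \rmH_2(\lsl(I;A)) \cong \dirlim_F \rmH_2(\lsl(F;A)) = \dirlim_F \Ker\frv_F$. Under \eqref{exam:new5} each $\Ker\frv_F\cong\HC_1(A)$, and the transition maps $\widehat f_{F'F}|_{\Ker\frv_F}$ correspond, under these isomorphisms, to the identity of $\HC_1(A)$ (this is because $g_{F'F}$ sends $e_{ij}(a)$ to $e_{ij}(a)$ and the standard identification of $\Ker\frv_F$ with $\HC_1(A)$ is given by the same cyclic-homology cocycle formula independent of $F$, e.g.\ $\ll a,b\gg \mapsto \sum$ of suitable $e_{ij}$-brackets; compatibility is checked on generators). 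A direct limit of copies of a fixed module along identity maps is that module, so $\Ker\frv_I\cong\HC_1(A)$.

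The main obstacle I expect is the bookkeeping needed to identify the direct-limit system $(\uce(\lsl(F;A)),\uce(f_{F'F}))$ with $(\st(F;A),g_{F'F})$ and, correspondingly, the system $(\Ker\frv_F,\ \widehat f_{F'F}|)$ with the constant system $(\HC_1(A),\Id)$ — i.e.\ verifying that all the identifications supplied by \eqref{exam:new5} for the various $F$ can be chosen simultaneously compatible with the transition maps. This is where one must be careful: the isomorphism $\uce(\lsl(F;A))\cong\st(F;A)$ is unique (universal central extensions being unique up to unique isomorphism), which forces compatibility with $\uce(f_{F'F})$ automatically, but the induced identification $\Ker\frv_F\cong\HC_1(A)$ should be pinned down via an explicit $F$-independent formula so that the transition maps visibly become the identity. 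Once that is in place, everything else is a direct application of the results of Section~\ref{sec:1}.
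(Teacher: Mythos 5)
Your proposal is correct and follows essentially the same route as the paper: realize $\lsl(I;A)$ and $\st(I;A)$ as direct limits over finite subsets $F$ (the paper uses all finite subsets, you use the cofinal subsystem with $|F|\ge 5$, which is in fact the cleaner choice since \eqref{exam:new5} only applies there), identify $\dirlim_F\st(F;A)\cong\st(I;A)$ via the presentation, and apply Theorem~\ref{ucedirlim} together with \eqref{exam:new5}. Your treatment of the kernel via \eqref{dirlimh2} and the constancy of the transition maps on $\HC_1(A)$ is slightly more explicit than the paper's ``it is immediate,'' but it is the same argument.
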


\begin{proof} This can be proven by adapting the proof of \eqref{exam:new5}
to our setting. Instead we prefer to give a proof based on
Theorem~\ref{ucedirlim}. This is possible since, denoting by $\scF$
the set of finite subsets of $I$ ordered by inclusion, the Lie
superalgebra $\lsl(I;A)$ is indeed a direct limit: $ \lsl(I;A) =
\bigcup_{F\in \scF} \lsl(F;A) \cong \dirlim_{F\in \scF} \lsl(F;A)$.
Hence $\uce(\lsl(I;A)) \cong \dirlim_{F\in \scF} \uce(\lsl(F;A))
\cong \dirlim_{F\in \scF} \st(F;A)$. Thus we need to show that
$\dirlim_{F\in \scF} \st(F;A) \cong \st(I;A)$. This follows from the
diagram below, where $\psi_F$ is given by sending a generator
$e_{ij}(a) \in \st(F;A)$ to $e_{ij}(a) \in \st(I;A)$. The existence
of $\vphi$ then follows from the definition of a direct limit,
applied to $(\psi_F; F\in \scF)$. The families  $(e_{ij}(a)\in
\st(F;A): F\in \scF)$ give rise to elements $\fre_{ij}(a) \in
\dirlim_{F\in \scF} \st(F;A)$ satisfying the relations
\eqref{exam:new3}--\eqref{exam:new4}, whence the existence of the
map $\psi$ sending $e_{ij}(a)\in \st(I;A)$ to $\fre_{ij}(a)$.
 \begin{equation*} \xymatrix{
 \st(F;A)  \ar[rr]^{f_{F'F}}
    \ar[dr]^{\vphi_F}
    \ar@/_/[ddr]_{\psi_{F}}
    & & \st(F';A) \ar[dl]_{\vphi_{F'}}
        \ar@/^/[ddl]^{\psi_{F'}}\\
  &\dirlim \st(F;A)  \ar@<0.5ex>[d]^{\varphi}& \\
& \st(I;A) \ar@<0.5ex>[u]^\psi&}  \end{equation*}  It is immediate
that $\vphi$ and $\psi$ are inverses of each other, and that $\Ker
\frv_I \cong \HC_1(A)$.
 \end{proof}

\begin{example}[$\lsl(I;A)$ for $A$ an associative commutative
superalgebra] \label{sl} Let $A$ be a unital associative and
commutative $k$-superalgebra, thus $[A,A]=0$. Therefore the
descriptions of $\lsl(I;A)$ and $\HC_1(A)$ simplify to
\begin{align*}
 \lsl(I;A) &= \{ x\in \lgl(I;A) : \str(x)=0 \} \cong
                  \lsl(I;k) \ot_k A, \\
  \HC_1(A) &= \, \ll A, A \gg.
\end{align*}
Moreover, the universal central extension $\st(I;A)$ can be
described via a $2$-cocycle as follows. The Lie superalgebra $\lsl(m,n;A)$ has a central
$2$-cocycle $\tau_{mn}$ with values in $\HC_1(A)$:
$$
   \tau_{mn}(x,y) =
\allowbreak \textstyle \sum_{1\leq i\leq m, 1\leq j\leq m+n}\ll
x_{ij},y_{ji}\gg- \sum_{m+1\leq i\leq m+n, 1\leq j\leq m+n}\ll
x_{ij},y_{ji}\gg$$ for $x=(x_{ij})$, $y=(y_{ij}) \in \lsl(m,n;A)$.
We let $\lsl(m,n,A) \oplus \HC_1(A)$ be the corresponding Lie
superalgebra, and view it as a central extension of $\lsl(m,n;A)$ by
projecting onto the first factor. From now on we suppose $m+n \ge 5$
and that the map $$h_{mn} : \uce(\lsl(m,n;A)) \to \lsl(m,n;A) \oplus
\HC_1(A), \quad h_{mn} \la x,y \ra = [x,y] \oplus \tau_{mn}(x,y)$$
is an isomorphism of central extensions:
\begin{equation} \label{sl1}
 \uce\big(\lsl(m,n;A)\big) \cong \lsl(m,n;A) \oplus \HC_1(A)
 \quad \hbox{\it as central extensions.}
\end{equation}
The assumption (\ref{sl1}) is true in any one of the following
situations:
\begin{itemize}
  \item[(a)] $n=0$, $A$ an algebra \cite{KL} or a superalgebra \cite{CG},

  \item[(b)] $A$ an algebra \cite{MP1, IK}. \end{itemize}

Let now $(\mathfrak{sl}(m_i,n_i,A),f_{ji})$ be a directed system of
Lie superalgebras with $m_i + n_i \ge 5$. The transition maps
$f_{ji}:\mathfrak{sl}(m_i,n_i,A)\rightarrow
\mathfrak{sl}(m_j,n_j,A)$ lift uniquely to Lie superalgebra
morphisms
$\widehat{f}_{ji}:\uce(\mathfrak{sl}(m_i,n_i,A))\rightarrow
\uce(\mathfrak{sl}(m_j,n_j,A))$. Hence, we get a directed system
$(\lsl(m_i,n_i,A) \oplus \HC_1(A))_{i \in I}$ with transition maps $
h_{m_j\, n_j} \circ \widehat f_{ji} \circ h_{m_i\, n_i}^{-1} =
f_{ji} \oplus g_{ji}$ where the map $g_{ji}: \HC_1(A)\rightarrow
\HC_1(A)$ is given by $g_{ji}(\tau_{m_i n_i}(x,y))=\tau_{m_j
n_j}(f_{ji}(x),f_{ji}(y))$ for $x,y\in \mathfrak{sl}(m_i,n_i,A)$. We now define a central $2$-cocycle $\tau_I$ for the direct limit
Lie superalgebra
$$\mathfrak{sl}(I;A)=\dirlim \mathfrak{sl}(m_i,n_i,A)$$
with values in $\HC_1(A)$.  Let $x,y\in\mathfrak{sl}(I;A)$. Thus
$x=\varphi_p(x_p)$ and $y=\varphi_q(y_q)$ for some $p,q\in I$,
$x_p\in \lsl(m_p,n_p, A)$ and $y_q\in \lsl(m_q,n_q, A)$. Here
$\varphi_p,\varphi_q$ are the canonical maps for
$\mathfrak{sl}(I;A)$. There exists $k\in I$ such that $k\geq p,
k\geq q$ and $f_{kp}(x_p), f_{kq}(y_q)\in  \lsl(m_k,n_k, A)$. Then
the cocycle $\ta_I$ for $\mathfrak{sl}(I;A)$ is given by
\begin{equation} \label{deftau} \tau_I (x,y)
=\tau_{m_k n_k}(f_{kp}(x_p), f_{kq}(y_q)).\end{equation} We now get from Proposition \ref{newassprop} that
$\mathfrak{st}(I;A)\cong \uce\big( \mathfrak{sl} (I;A)\big)
\cong\mathfrak{sl}(I;A) \oplus \HC_1(A),$ where the $2$-cocycle
$\tau_I$ is given explicitly by \eqref{deftau}. Summarizing the
above, we have proven the following.
\end{example}

\begin{cor}\label{sl2cor} Let $A$ be a unital associative commutative superalgebra over a commutative ring $k$. Let $(I, \leq)$ be an arbitrary directed
set and let $(\mathfrak{sl}(m_i,n_i,A),f_{ji})$ be a directed system
of Lie superalgebras with $m_i + n_i \ge 5$. We suppose \eqref{sl1}
and denote by $\mathfrak{sl}(I;A):=\dirlim \mathfrak{sl}(m_i,n_i,A)$
the corresponding direct limit, which is a perfect Lie superalgebra
of possibly infinite rank. Then
\begin{equation} \label{slscor1}  \uce\big( \mathfrak{sl}
(I;A)\big) \cong \mathfrak{sl}(I;A) \oplus \HC_1(A) \end{equation}
as central extensions, where the Lie superalgebra structure on the
right is given by the $2$-cocycle $\tau_I$ of \eqref{deftau}.
\end{cor}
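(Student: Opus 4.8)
The plan is to assemble the corollary directly from the machinery already set up in Example~\ref{sl} and the general theory of Section~\ref{sec:1}, so that very little genuinely new work is required. First I would observe that each $\lsl(m_i,n_i,A)$ is perfect (this was recorded in Example~\ref{exam:new}, since $m_i+n_i\ge 3$), so the hypotheses of Theorem~\ref{ucedirlim} are met and $\lsl(I;A)=\dirlim\lsl(m_i,n_i,A)$ is perfect. Theorem~\ref{ucedirlim} then gives a canonical isomorphism $\dirlim\uce(\lsl(m_i,n_i,A))\xrightarrow{\ \sim\ }\uce(\lsl(I;A))$, together with the statement that $\dirlim\fru_i$ is a universal central extension of $\lsl(I;A)$.

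Next I would transport the assumption \eqref{sl1} through the direct limit. By \eqref{sl1} the maps $h_{m_in_i}:\uce(\lsl(m_i,n_i,A))\to\lsl(m_i,n_i,A)\oplus\HC_1(A)$ are isomorphisms of central extensions, and, as computed in Example~\ref{sl}, conjugating the transition maps $\widehat f_{ji}$ by the $h_{m_in_i}$ yields the transition maps $f_{ji}\oplus g_{ji}$ on the system $(\lsl(m_i,n_i,A)\oplus\HC_1(A))_{i\in I}$, where $g_{ji}$ is induced by $\tau_{m_jn_j}$ and $f_{ji}$. Thus $(h_{m_in_i})_{i\in I}$ is an isomorphism of directed systems, hence induces an isomorphism
$$
 \dirlim\uce(\lsl(m_i,n_i,A))\ \xrightarrow{\ \sim\ }\ \dirlim\big(\lsl(m_i,n_i,A)\oplus\HC_1(A)\big).
$$
It remains to identify the right-hand direct limit with $\lsl(I;A)\oplus\HC_1(A)$ equipped with the cocycle $\tau_I$. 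Since the $\HC_1(A)$-component of each transition map $f_{ji}\oplus g_{ji}$ restricts to... well, one checks on generators $\ll a,b\gg$ that $g_{ji}$ is the identity of $\HC_1(A)$ when $A$ is commutative, because $\tau_{m_jn_j}(f_{ji}(E_{pq}(a)),f_{ji}(E_{qp}(b)))=\ll a,b\gg$ independently of the embedding; consequently the $\HC_1(A)$-summand is constant along the system, and $\dirlim(\lsl(m_i,n_i,A)\oplus\HC_1(A))\cong(\dirlim\lsl(m_i,n_i,A))\oplus\HC_1(A)=\lsl(I;A)\oplus\HC_1(A)$. Tracing the cocycle through this identification: an element of $\lsl(I;A)$ is $\vphi_p(x_p)$, lifts at level $k\ge p,q$ as in \eqref{deftau}, and the bracket in the limit picks up the $\HC_1(A)$-term $\tau_{m_kn_k}(f_{kp}(x_p),f_{kq}(y_q))$, which is exactly $\tau_I(x,y)$; this shows $\tau_I$ is well-defined (independent of $p,q,k$) and that the Lie superalgebra structure on $\lsl(I;A)\oplus\HC_1(A)$ is the one given by $\tau_I$.

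Putting the three isomorphisms together — Theorem~\ref{ucedirlim}, the system isomorphism induced by the $h_{m_in_i}$, and the splitting off of the constant summand $\HC_1(A)$ — gives $\uce(\lsl(I;A))\cong\lsl(I;A)\oplus\HC_1(A)$ as central extensions of $\lsl(I;A)$, which is \eqref{slscor1}; and since by Theorem~\ref{ucedirlim} the composite central extension is universal, there is nothing more to prove. The main obstacle, such as it is, is purely bookkeeping: verifying that the family $(h_{m_in_i})$ really is compatible with the transition maps (done in Example~\ref{sl}) and that the limit cocycle coincides with $\tau_I$ — i.e., checking well-definedness of $\tau_I$ in \eqref{deftau} and that it computes the bracket in the direct limit. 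No new super-sign subtleties arise beyond those already handled in Example~\ref{sl}, since commutativity of $A$ collapses $[A,A]$ to $0$.
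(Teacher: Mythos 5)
Your argument is essentially the paper's own proof: the corollary is stated there as a summary of Example~\ref{sl}, whose content is exactly the three steps you assemble --- Theorem~\ref{ucedirlim}, conjugation by the isomorphisms $h_{m_in_i}$ to pass to the directed system $\big(\lsl(m_i,n_i,A)\oplus\HC_1(A),\, f_{ji}\oplus g_{ji}\big)$, and identification of the resulting limit with $\lsl(I;A)\oplus\HC_1(A)$ carrying the cocycle $\tau_I$ of \eqref{deftau}. One caveat on the only step where you go beyond what the paper writes down: your justification that $g_{ji}=\mathrm{id}_{\HC_1(A)}$, namely that $\tau_{m_jn_j}\big(f_{ji}(E_{pq}(a)),f_{ji}(E_{qp}(b))\big)=\,\ll a,b\gg$ ``independently of the embedding,'' tacitly assumes that the transition maps carry elementary matrices to elementary matrices. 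The hypothesis of the corollary allows arbitrary morphisms $f_{ji}$, and for those the claim can fail: the block-diagonal monomorphism $x\mapsto\mathrm{diag}(x,x)$ from $\lsl(5,0;A)$ into $\lsl(10,0;A)$ gives $g_{ji}=2\cdot\mathrm{id}$, so the $\HC_1(A)$-summand is not constant along the system and the limit of the second components is $\dirlim\big(\HC_1(A),g_{ji}\big)$ rather than $\HC_1(A)$. The paper makes the same tacit assumption --- it is precisely what makes \eqref{deftau} independent of the choice of $k$ --- so your proof is faithful to the paper's; but to be airtight one should either restrict to transition maps with $g_{ji}=\mathrm{id}$ (e.g.\ the standard inclusions) or state the conclusion with $\dirlim\HC_1(A)$ in place of $\HC_1(A)$.
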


\begin{example}[$\lsl_J(A)$ for $A$ an associative
algebra] \label{exam:slinf} Let $A$ be an associative unital
$k$-algebra over a commutative ring $k$ containing $\frac{1}{2}$,
and let $J$ be an arbitrary, possible infinite set with $|J|\ge 5$.
We denote by $\lsl_J(A)$ the Lie algebra of finitary matrices over
$A$ (only finitely many non-zero entries) and with trace in $[A,A]$.
Since $\lsl_J(A)$ is the direct limit of the Lie algebras
$\lsl_F(A)$ where $F$ runs through the finite subsets of $J$,
Corollary~\ref{sl2cor} implies that $\uce(\lsl_J(A)) \cong \lsl_J(A)
\oplus \HC_1(A)$. This is proven in \cite{KL} for $J$ finite or
countable and in \cite{Wel} for arbitrary $J$, using the theory of
root graded Lie algebras.
\end{example}

\begin{example}[Ortho-symplectic Lie superalgebra $\osp(I;A)$ for $A$ an associative commutative
superalgebra] \label{exam:osp} The ortho-symplectic Lie superalgebra
$\osp(m,n;A)$ can be defined in the usual way, see for example
\cite{IK,IK2,MP2}. Since $\osp(m,n;A)$ is a subalgebra of
$\lsl(m,n;A)$, the restriction of the $2$-cocycle $\ta_{mn}$ of
Example~\ref{sl} defines a $2$-cocycle of $\osp(m,n;A)$ with values
in $\HC_1(A)$ and thus gives rise to a central extension. We suppose
that the map $\uce(\osp(m,n;A)) \to \osp(m,n;A) \oplus \HC_1(A)$,
given by $\la x,y\ra \mapsto [x,y] \oplus \tau_{mn}(x,y)$ is an
isomorphism:
\begin{equation} \label{exam:osp1}
 \uce\big(\osp(m,n;A)\big) \cong \osp(m,n;A) \oplus \HC_1(A)
 \quad \hbox{\it as central extensions.}
\end{equation}
Our assumption \eqref{exam:osp1} is fulfilled in any one of the
following situations:
\begin{itemize}
  \item[(a)] $k$ a field of characteristic $0$ \cite{IK2},

  \item[(b)] $A$ a commutative algebra \cite{IK, MP2}.
\end{itemize}
The reference \cite{MP2} assumes that $m\ge 5,n\ge 10$. 

Let now $(\osp(m_i,n_i;A),f_{ji})$ be a directed system of Lie
superalgebras. One shows as in Example~\ref{sl} that there exists a
well-defined $2$-cocycle $\ta'_I$ for the direct limit Lie
superalgebra
$$\osp(I;A):=\dirlim \osp(m_i,n_i;A)$$
with values in $\HC_1(A)$. From
Theorem \ref{ucedirlim} we then get as in Corollary~\ref{sl2cor}
\begin{equation} \label{exam:osp2} \begin{split}
\uce\big( \osp(I;A)\big) &\cong \dirlim
\uce\big(\osp(m_i,n_i;A)\big) \cong \dirlim
\big(\osp(m_i,n_i;A)\oplus \HC_{1}(A)\big) \\ & \cong\osp(I;A)
\oplus \HC_1(A).
\end{split}\end{equation}
\end{example}

\begin{example}[Locally finite Lie superalgebras] \label{exam:lfls}
Classically semisimple locally finite Lie superalgebras over
algebraically closed fields of characteristic $0$ were introduced
and studied in \cite{penkov:2004}, including a classification of the
simple infinite dimensional ones which admit a local system of root
injections of classical finite dimensional Lie superalgebras. They
are all direct limits $L=\dirlim_i L_i$ of classical simple Lie
superalgebras $L_i$, $i\in \N$ with $f_i : L_i \to L_{i+1}$ being
the natural inclusions. Referring the reader to \cite{penkov:2004}
for details, we simple present the classification list. We
abbreviate $\lsl(m,n)=\lsl(m,n;k)$ and $\osp(m,n) = \osp(m,n;k)$ in
the notation of \ref{sl} and \ref{exam:osp} respectively. The Lie
superalgebra ${\rm SP}(m)$ is the subalgebra of $\lsl(m,m)$ which
leaves invariant an odd nondegenerate super-antisymmetric bilinear
form, and $\sq(m)$ is the subalgebra of $\lsl(m,m)$ consisting of
matrices of the form (\ref{exam:new2}) with $x_1=x_4$, $x_2=x_3$ and
$\tr(x_2)=0$. With these notations, an infinite dimensional simple
Lie superalgebra, which admits a local system of root injections of
classical finite dimensional Lie superalgebras $L_i$, is isomorphic
to a Lie superalgebra $L$ in the following table ($i\geq 2,n\ge
0,k\ge 0,r\ge0, m\ge 2$):
\begin{equation} \label{exam:lflstab}
 \begin{tabular}{|c c || c c|}
\hline
 $L$ & $L_i$ & $L$ & $L_i$ \\
\hline
 $\lsl(\infty,n)$ & $\lsl(i,n)$ & $\lsl(\infty, \infty)$ &
     $\lsl(i,i)$\\
 ${\rm B}(\infty, 2k)$ & $\osp(2i+1, 2k)$ &
          $\quad {\rm B}(\infty, \infty)\quad $ & $\quad \osp(2i+1, 2i)\quad $ \\
 $\quad {\rm B}(2r+1,\infty)$  \quad & $\quad \osp(2r+1, 2i) \quad$ &
   ${\rm C}(\infty)$& \quad $\osp(2,2i)$ \\
 ${\rm D}(\infty, 2k)$ & $\osp(2i, 2k)$ &
    ${\rm D}(\infty, \infty)$ & $\osp(2i, 2i)$ \\
 ${\rm D}(2m,\infty)$ & $\osp(2m,2i)$ &
   ${\rm SP}(\infty)$ & ${\rm SP}(i)$ \\
 $\sq(\infty)$ & $\sq(i)$ & & \\
\hline
\end{tabular}
 \end{equation}
\end{example}

\begin{cor} \label{corlfls} The Lie superalgebras $L$ listed in
table \eqref{exam:lflstab} are all centrally closed.
\end{cor}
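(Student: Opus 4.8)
The plan is to deduce this from Corollary~\ref{centrally closed}. By the description in Example~\ref{exam:lfls}, each $L$ in table~\eqref{exam:lflstab} is the direct limit $L=\dirlim_i L_i$ of the finite-dimensional Lie superalgebras $L_i$ listed in the adjacent column, the $f_i$ being the natural inclusions, so that $L=\bigcup_i\vphi_i(L_i)$. Since passing from the chain $(L_i)$ to a cofinal subsystem $(L_i)_{i\ge i_0}$ leaves the direct limit unchanged, it is enough to prove that, in each row, all but finitely many $L_i$ are perfect and centrally closed, i.e.\ perfect with $\rmH_2(L_i)=0$; Corollary~\ref{centrally closed} then yields that $L$ is perfect and centrally closed.

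Perfectness of the $L_i$ (for $i$ past a finite bound if necessary) is immediate: the classical simple and semisimple entries are perfect, $\lsl(i,i;k)=[\lgl(i,i;k),\lgl(i,i;k)]$ holds by the definition in Example~\ref{exam:new}, and $\sq(i)$, ${\rm SP}(i)$ equal their own derived algebras by a short computation (for $\sq(i)$, an anticommutator of two trace-zero matrices in the odd part can have nonzero trace, so already $[\sq(i)_{\bar 0},\sq(i)_{\bar 1}]+[\sq(i)_{\bar 1},\sq(i)_{\bar 1}]=\sq(i)$).

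For central closedness I would quote Section~\ref{sec:2}. Applying Example~\ref{sl} with $A=k$, and using $\HC_1(k)=\ll k,k\gg\,=0$ for the field $k$ together with the fact that hypothesis~\eqref{sl1} is satisfied by its case~(b) (as $\frac{1}{2}\in k$ and $A=k$ is free with basis $\{1\}$), one obtains $\uce(\lsl(m,n;k))\cong\lsl(m,n;k)\oplus\HC_1(k)=\lsl(m,n;k)$ whenever $m+n\ge 5$, which excludes only finitely many $i$ in the $\lsl(i,n)$- and $\lsl(i,i)$-rows. Likewise, Example~\ref{exam:osp} with $A=k$ (case~(a), $k$ a field of characteristic $0$) gives $\uce(\osp(m,n;k))\cong\osp(m,n;k)$ for every $\osp$-row. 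For the two remaining rows ${\rm SP}(\infty)$ and $\sq(\infty)$ one invokes the corresponding results on the second homology of the periplectic and queer Lie superalgebras from the literature.

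I expect this last step to be the only genuine obstacle: Section~\ref{sec:2} was tailored to the $\lsl$- and $\osp$-type members of~\eqref{exam:lflstab}, so the strange series ${\rm SP}(i)$, $\sq(i)$ --- and, if one wants a completely self-contained proof, the finitely many low-rank members discarded above --- have to be handled separately, by a direct computation of $\rmH_2$ or by citation. Once $\rmH_2(L_i)=0$ is known for cofinally many $i$ in each row, Corollary~\ref{centrally closed} closes the argument; alternatively, \eqref{dirlimh2} then forces $\rmH_2(L)=0$, which together with perfectness of $L$ is precisely central closedness.
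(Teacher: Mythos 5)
Your proposal is correct and follows essentially the same route as the paper: reduce via Theorem~\ref{ucedirlim} (equivalently Corollary~\ref{centrally closed}) to central closedness of the $L_i$ for large $i$, settle the $\lsl$- and $\osp$-rows via \eqref{sl1}, \eqref{exam:osp1} and $\HC_1(k)=\{0\}$, and handle the remaining series by citation. The one item you leave open --- central closedness of ${\rm SP}(i)$ and $\sq(i)$ --- is exactly where the paper cites \cite[Th.~5.10]{IK2}, so your anticipated ``obstacle'' is resolved precisely as you predicted.
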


\begin{proof} The Lie superalgebras $L_i$ in table \eqref{exam:lflstab}
are all perfect. In view of Theorem~\ref{ucedirlim} it therefore
remains to show that they are centrally closed for large $i$. For
$L_i$ of type $\lsl(m,n)$ or $\osp(m,n)$ this follows from
\eqref{sl1} and \eqref{exam:osp1} since $\HC_1(k) = \{0\}$. For the
remaining two types this follows from \cite[Th.~5.10]{IK2}.
\end{proof}

\begin{example}[Locally finite Lie algebras] \label{exam:lf}
Corollary~\ref{corlfls} applies in particular to the simple locally
finite Lie algebras $\lsl(\infty) = \lsl(\infty, 0)$, $
\mathfrak{o}(\infty) = {\rm B}(\infty, 0) = {\rm D}(\infty, 0)$,
   $\frsp(\infty) = {\rm D}(0, \infty) $ (the only infinite dimensional simple root reductive Lie
algebras), studied for example in \cite{BB}, \cite{DP} and
\cite{PS}.
\end{example}

\begin{example}[Root-graded Lie algebras]\label{exam:rg}
(a) Let $k$ be a ring in which $2$ and $3$ are invertible, and let
$L$ be a Lie algebra graded by a locally finite reduced root system
$R$ as defined in \cite{n:3g}, see also \cite[5.1]{n:persp}. Thus,
$L$ is graded by $\scQ(R)=\Span_\Z(R)$ with $\supp_{\scQ(R)}L = R$,
i.e.,
$$ L = \textstyle  \bigoplus_{\al \in R} L_\al , \quad
   [L_\al, L_\be] \subset L_{\al + \be}, $$
satisfies $L_0 = \sum_{0\ne \al \in R} [L_\al, L_{-\al}]$ and has
the property that for $\al \ne 0$ there exists an $\lsl_2$-triple
$(e_\al, h_\al, f_\al) \in L_\al \times L_0 \times L_{-\al}$ such
that $[h_\al, x_\be] = \langle \be, \al\ch\rangle x_\be$ holds for
every $\be \in R$ and $x_\be \in L_\be$. By \cite[3.15]{lfrs}, $R$
is a direct limit of finite root systems, say $R=\dirlim R_i$ where
$i$ runs through a directed set $(I, \le)$. The subalgebra
\begin{equation} \label{exam:rg1}
     L_i = \textstyle \big( \bigoplus_{0\ne \al \in R_i} L_\al \big)
         \oplus \sum_{0\ne \al\in R_i} [L_\al, L_{-\al}]
\end{equation} is graded by the root system $R_i$, and it is immediate that
$L= \dirlim L_i$.

Any root-graded Lie algebra is perfect, whence $\uce(L) \cong
\dirlim \uce(L_i)$ by Theorem~\ref{ucedirlim}. In fact, something
more precise is true. One knows that the universal central extension
of a root-graded Lie algebra is again graded by the same locally
finite root system, \cite[Prop.~5.4]{n:persp}. Thus the root-graded
Lie algebra $\uce(L)$ is a direct limit of root-graded Lie algebras,
$$\uce(L) \cong  \dirlim \uce(L)_i,$$ where $\uce(L)_i$ is defined in the
same way as $L_i$.

(b) Suppose in the following that $k$ is a field of characteristic
zero. If $K$ is a centreless Lie algebra  graded by a finite
irreducible reduced root system, the group $\rmH_2(K)$ is known to
be the full skew-dihedral homology group $\HF(\mathfrak{a})$, where
$\mathfrak{a}$ is the coordinate algebra of $K$,
\cite[Th.~4.13]{ABG}.

Let now $L$ be a centreless Lie algebra graded by a locally finite
irreducible reduced root system $R$ of rank $\ge 9$. Then $R=\dirlim
R_i$ where the $R_i$ are finite, irreducible, reduced, have rank
$\ge 9$ and are of the same type as $R$, \cite[8.3]{lfrs}. It is
moreover no harm to assume that $R_0 \subseteq R_i$ for some fixed
$0\in I$. It then follows that the root-graded Lie algebras $L_i$ of
\eqref{exam:rg1} all have the same coordinate algebra $\mathfrak{a}$
(this has also been noted by M.~Yousofzadeh). Notice that if $f_{ji}(\gz(L_i))\subset \gz(L_j)$, then $\dirlim L_i  \cong \dirlim L_i/ \gz(L_i)$. Hence
\begin{equation}\begin{split} \label{exam:rg2}
   \uce(L) \cong \uce \big(\dirlim L_i/ \gz(L_i)\big) &\cong \dirlim \uce (L_i/ \gz(L_i) )\\ &\cong \dirlim L_i/ \gz(L_i) \oplus \HF(\mathfrak{a})  \cong L \oplus \HF(\mathfrak{a}).
\end{split}\end{equation} Special cases of root-graded Lie algebras are the so-called Lie
tori, which occur as cores and centreless cores of locally extended
affine Lie algebras \cite{morita-yoshii,neeb:LEALA}. More generally,
the cores of affine reflection Lie algebras are root-graded Lie
algebras of possibly infinite rank \cite[6.4, 6.5]{n:persp}.
\end{example}


\begin{thebibliography}{SCG}


\bibitem[ABG]{ABG} B. Allison, G. Benkart and Y. Gao, Central extensions of Lie algebras graded by finite root systems, Math. Ann. {\bf 316} (2000), 499--527.


\bibitem[BB]{BB} Y. Bahturin and G. Benkart, Some constructions in the theory of locally finite simple Lie algebras, J. Lie Theory {\bf 14} (2004), 243--270.

\bibitem[BE]{BE} G. Benkart and A. Elduque, Lie superalgebras graded by the root system $A(m,n)$,
    J. Lie Theory \textbf{13} (2003), 387--400.

\bibitem[Bo]{bou:A} N. Bourbaki, Alg\`ebre. Chapitres 1 \`a 3,
 Hermann, Paris 1970, xiii+635.

\bibitem[CG]{CG} H.~Chen and N.~Guay, Central extensions of matrix
Lie superalgebras over $\ZZ/2\ZZ$-graded algebras, preprint,
December 2010.

\bibitem[DP]{DP} I. Dimitrov and I. Penkov, Locally semisimple and maximal subalgebras of the finitary Lie algebras gl($\infty$), sl($\infty$), so($\infty$), and sp($\infty$), J. Algebra {\bf 322} (2009), 2069--2081.

\bibitem[G]{G} Y. Gao, On the Steinberg Lie algebras $\mathfrak{st}_2(R)$, Comm. Algebra {\bf 21} (10) (1993), 3691--3706.

\bibitem[GS]{GS} Y. Gao and S.~Shang, Universal coverings of Steinberg
Lie algebras of small characteristic, J. of Algebra \textbf{311}
(2007), 216--230.


\bibitem[GN]{gn2}
E.~Garc{\'{\i}}a and E.~Neher, Tits-{K}antor-{K}oecher superalgebras
of Jordan superpairs covered by grids, Comm. Algebra {\bf 31} (7)
(2003), 3335--3375.


\bibitem[IK1]{IK} K. Iohara and Y. Koga, Central extensions of Lie superalgebras, Comment. Math. Helv. \textbf{76} (2001), 110--154.

\bibitem[IK2]{IK2} \bysame, Second homology of Lie superalgebras, Math. Nachr. \textbf{278} (9) (2005), 1041--1053.




\bibitem[KL]{KL} C. Kassel and J.-L. Loday, Extensions centrales d'alg\`ebres de Lie, Ann. Inst. Fourier (Grenoble) {\bf 32} (4) (1982), 119--142.


\bibitem[LN]{lfrs}
O. Loos and E. Neher, Locally finite root systems, Mem. Amer. Math.
Soc. {\bf 171} (811) (2004), x+214.

\bibitem[MP1]{MP1} A. V. Mikhalev and I. A. Pinchuk, Universal central extensions of the matrix Lie superalgebras $\lsl(m,n,A)$, Contemporary Mathematics {\bf 264} (2000), 111--125.

\bibitem[MP2]{MP2} \bysame, Universal central extensions of Lie superalgebras, Journal of Mathematical Sciences {\bf 114} (4) (2003), 1547--1560.

\bibitem[MY]{morita-yoshii}
J.~Morita and Y.~Yoshii, Locally extended affine {L}ie algebras, J.
Algebra {\bf 301} (2006), 59--81.


\bibitem[Nee]{neeb:LEALA} K.-H. Neeb, Unitary highest weight modules of locally affine Lie algebras, in Quantum affine algebras, extended affine Lie
algebras, and their applications, Contemp. Math., Vol. 506, Amer. Math. Soc., Providence, RI, 2010, 227--262.


\bibitem[N1]{n:3g} E. Neher, Lie algebras graded by 3-graded root systems and
Jordan pairs covered by a grid, Amer. J. Math {\bf 118} (1996),
439--491.


\bibitem[N2]{N1} \bysame, An introduction to universal central
extensions of Lie superalgebras, Proceedings of the ``Groups, rings,
Lie and Hopf algebras" conference (St. John's, NF, 2001), Math.
Appl., Vol. 555, Kluwer Acad. Publ. Dordrecht, 2003, 141--166.

\bibitem[N3]{n:persp} \bysame, Extended affine Lie algebras and other
generalizations -- a survey, in ``Trends and developments in
infinite dimensional Lie theory", Progress in Mathematics, Vol. 288,
Birkh\"auser, 2010, 53--126.

\bibitem[P]{penkov:2004} I. Penkov, Classically semisimple locally
      finite Lie superalgebras,
 Forum Math. \textbf{16} (2004), 431--446.


\bibitem[PS]{PS} I. Penkov and V. Serganova, Categories of
integrable $sl(\infty)-, o(\infty)-, sp(\infty)-$modules,
Contemporary Mathematics, to appear.

\bibitem[S]{S} H. Salmasian, Conjugacy of maximal toral subalgebras of direct limits of loop algebras, Contemp. Math., Vol. 490, Amer. Math. Soc., Providence, RI, 2009, 133--150.

\bibitem[SCG]{G2} S. Shang, H. Chen and Y. Gao, Central extensions of Steinberg superalgebras of small rank, Comm. Algebra {\bf 35} (2007), 4225--4244.

\bibitem[Wei]{We} C. Weibel, An introduction to homological
algebra, Cambridge Studies in Advance Mathematics, Vol. 38,
Cambridge University Press, 1994.

\bibitem[Wel]{Wel} A. Welte, Central extensions of graded Lie algebras, Ph.D. thesis, University of Ottawa, 2009.

\end{thebibliography}
\end{document}